
\documentclass[11pt,draftcls,onecolumn,doublespace]{IEEEtran}  

\ifCLASSINFOpdf
\else
\fi

\newcommand{\bx}{\mathbf{x}}

\usepackage{amsfonts}
\usepackage{mathrsfs}

\usepackage{amsmath,amssymb,url,amsfonts,mathrsfs}
\usepackage{enumerate}
\usepackage{verbatim}
\usepackage{pgf}

\usepackage{graphicx}
\usepackage[bookmarks=true,pageanchor,colorlinks,linkcolor=blue,anchorcolor=blue,citecolor=blue,urlcolor=blue,hyperfootnotes=false]{hyperref}
\usepackage[all]{hypcap}
\usepackage{setspace}

\hyphenation{op-tical net-works semi-conduc-tor}

\long\def\blue#1{{\color{black}#1}}
\newtheorem{theorem}{{\bf Theorem}}[section]
\newtheorem{lemma}{{\bf Lemma}}[section]

\newtheorem{definition}{{\bf Definition}}[section]
\newtheorem{example}{{\bf Example}}[section]
\newtheorem{assumption}{{\bf Assumption}}[section]
\newtheorem{remark}{{\bf Remark}}[section]

\renewcommand{\v}[1]{\ensuremath{\boldsymbol{\mathrm{#1}}}}
\newcommand{\E}{\ensuremath{\mathbb{E}}}

\newcommand{\Z}{\mathbb{Z}}

\long\def\old#1{}

\hyphenation{op-tical net-works semi-conduc-tor}

\begin{document}

\title{\blue{Dynamic} Scheduling for Charging Electric Vehicles: \blue{A Priority Rule}}
%
%

%


%



\author{Yunjian~Xu,
        Feng~Pan, Lang~Tong
{\thanks{Yunjian Xu is with the
Engineering Systems and Design Pillar,
        Singapore University of Technology and Design, Singapore (e-mail: yunjian\_xu@sutd.edu.sg).
Feng Pan is with the Electricity Infrastructure Group, Pacific Northwest National Laboratory, Richland, WA 99354, USA (e-mail: feng.pan@pnnl.gov). Lang Tong is with the School of Electrical
and Computer Engineering, Cornell University, Ithaca, NY 14853, USA (e-mail: ltong@cornell.edu).}
\thanks{This
research was supported in part by the MIT-SUTD International Design Center (IDC) Grant IDG21400103,
 and the National Science Foundation under CNS 1135844. A preliminary version of this technical note appeared
in the Proceedings of the 2012 IEEE Conference on Decision and Control \cite{XP12}.}}
}

\maketitle

\begin{abstract}
We consider the scheduling of multiple tasks with pre-determined deadlines
under random processing cost. This problem is motivated by
the potential of large scale adoption of plug-in (hybrid) electric vehicles {(PHEVs)} in the near future.
The charging requests of
 PHEVs usually have deadline constraints, and the electricity cost associated
with PHEV charging is usually random due to the
 uncertainty in both system load and renewable generation.
 We seek to properly schedule the battery charging of
multiple PHEVs so as to
 minimize the overall cost, which is
derived from the total charging cost and the penalty for not completing
charging before requested deadlines. Through a dynamic programming formulation, we
establish the Less Laxity and Longer remaining Processing time (LLLP)
principle that improves any charging policy on a sample-path basis,
when the non-completion
penalty is a convex function of the additional time needed to
fulfill the uncompleted request.
Specifically, the LLLP principle states that priority should be given to vehicles that have less
laxity and longer remaining processing times.
 Numerical results demonstrate that heuristic policies that violate the LLLP principle, for example,
the earliest deadline first (EDF) policy, can result in significant performance loss.
\end{abstract}

\begin{IEEEkeywords}
Plug-in hybrid electric vehicle, Dynamic programming, Demand response, Deadline scheduling
\end{IEEEkeywords}

%
\IEEEpeerreviewmaketitle

\section{Introduction}

{We study the scheduling of multiple processors to perform
 tasks with deadlines under random processing cost.}
Each task requires a certain amount of  processing time before its deadline, and failure to
fulfill its request incurs non-completion penalty. Different from most existing
deadline scheduling models,  our formulation allows the instantaneous processing cost to be
 time-variant and stochastic. We seek to characterize an optimal
scheduling policy that minimizes the long-term expected total cost (the sum of
 processing cost and non-completion penalty).
 Although the results derived in this technical note generally apply to the aforementioned framework,
  we will
  focus on the scheduling of PHEV charging that may have significant impacts on both the reliability
and efficiency of the next generation electric power grids.

Becoming popular
 in many countries, PHEVs (plug-in hybrid electric vehicles) may achieve significant market share over the next decade.  However, the charging of a large number of
PHEVs can add considerable stress to an existing power grid,
  especially at the distribution network level \cite{CH10,LSA11}.
 The scheduling of
charging PHEVs receives much attention in recent years \cite{CF10,WA12,GC14, HJ15}.
To minimize the
load variance through PHEV charging, a few recent papers propose several
approaches based on game theoretic analysis \cite{MCH10,KH13} and decentralized
optimization \cite{GTL11}.
  Although dynamic programming based
approaches have been employed to study the optimal control of power
management for a single PHEV \cite{RI08,MF11},
there lacks a dynamic
framework (on the scheduling of charging multiple PHEVs) that explicitly incorporates the stochasticity in both PHEV arrivals and charging costs.

This work is intimately related to the literature on deadline scheduling. For a single processor scheduling problem, it is well
known that the earliest
deadline first (EDF) policy \cite{LL73} and the least-laxity first
(LLF) policy \cite{D74} are optimal, if it is feasible to finish all tasks
before their deadlines. When the completion of all tasks is not feasible,
it has been demonstrated that EDF and LLF may perform poorly \cite{L86}.\footnote{There is also a substantial literature on
deadline scheduling of multiple processors \cite{D89}; for a survey, see \cite{DB11}.}
Closer to the present work, the authors of \cite{BE89,BT97} conduct a dynamic programming based
approach to characterize
 optimal scheduling policies for
the delivery of messages that would extinct after their individual deadlines.
{In the aforementioned literature} processing capacity (of each individual processer) is usually assumed to be constant over the entire
operation interval.
 As noted in \cite{SG11}, the scheduling
 of PHEV charging is fundamentally different, since the
cost associated with PHEV charging is time-varying and stochastic (due to the
inherent volatility in renewable generation and system load).

In this note, we consider a system with multiple (possibly a large
number of) PHEVs and an underlying power grid with renewable
generation. A system operator schedules the charging of PHEVs so as
to minimize the long-run average cost. The formulated dynamic program
(DP) incorporates {\it arbitrary} randomness in
 both the charging cost and the PHEV arrival processes.

The main contribution of this technical note is to establish
\blue{an important and somewhat counter-intuitive (partial) characterization
on  optimal scheduling policies.}
In particular, we show the
{\bf Less Laxity and Longer remaining Processing time} (LLLP)
principle: priority should be given to vehicles that have less
laxity and
longer remaining processing times, if the non-completion penalty (as
a function of the additional time needed to complete the task) is
convex.\footnote{According to the LLLP principle, for two vehicles with
the same laxity, priority should be given to the vehicle with a
later deadline (and longer remaining processing time). This is
in sharp contrast to the case of a single processor with fixed
processing capacity, where the earliest deadline first (EDF) policy
is shown to be optimal.} For a given heuristic policy, we show that
an LLLP-based ``interchanging'' policy cannot be worse than the original heuristic.
 This result holds on every sample path and is robust against arbitrary random arrival process and
charging cost. We also show
(under some additional mild assumptions) the existence of an optimal
stationary policy that always gives priority to vehicles with less
laxity and longer remaining processing times. \blue{Numerical results presented in
Section \ref{sec:app} show that the LLLP principle is practically useful:} heuristic policies that violate the LLLP principle, such as
the well known earliest deadline first (EDF) policy, can result in significant performance loss.

\section{Model}\label{sec:model}

We consider an infinite-horizon {discrete time} model.
As in \cite{SG11}, we assume that each vehicle reports its arrival
time, departure time, and charging request to the  system operator at its
arrival. The system operator uses all information available at the
current stage
{({\it i.e.}, the current system state of the DP to be formulated in Section \ref{sec:DP} that includes the states of all
arrived vehicles, the operating condition of the power system, as well as the prediction on future PHEV arrivals)}
to schedule the charging of PHEVs.

 We study the scheduling problem of $N$ PHEV chargers.
  For $i=1,\ldots,N$, we refer to the
vehicle that is connected to the $i$th charger as vehicle $i$. At stage $t$,
let $\mathcal {I}_t \, {\subseteq} \, \{1,\ldots,N\}$ denote the set of chargers that are connected to electric vehicles,
and $|\mathcal {I}_t|$ denote the number of vehicles connected to chargers.
For each vehicle $i \in \mathcal {I}_t $, let $\alpha_i$ and
$\beta_i$ be its arrival and departure time, respectively. Under the
assumption that both arrival and departure occur at the
beginning of each stage, vehicle $i$ can be charged from stage
$\alpha_i$ through stage $\beta_i-1$.
We assume that $1\le
\beta_i-\alpha_i \le B$, i.e., every vehicle stays at a facility for
at least one stage, and at most $B$ stages.

 For every $i \in \mathcal {I}_t$,
let $\gamma_{i,t}$ denote its \textbf{remaining processing
time} at stage $t$, i.e., the number of time units of charging needed
to meet vehicle $i$'s charging request {under a time-invariant constant charging rate}.
{We assume that the processing time of each vehicle
is no greater than $E$.}
At stage $t$, for every $i \in \mathcal {I}_t$, we use a
two-dimensional vector, $x_{i,t} \buildrel \Delta \over
=(\lambda_{i,t}, \gamma_{i,t})$, to denote the \textbf{state of
vehicle $i$}, where $\lambda_{i,t}\buildrel \Delta \over =\beta_i-t$ is the number of remaining stages of the vehicle at
charger $i$. {For notational convenience, for $i \notin \mathcal {I}_t$ we let
 $x_{i,t}=(0,0)$.}

For every $i \in  \mathcal {I}_t$, $a_{i,t}=1$ if vehicle $i$ is charged at stage $t$,
and $a_{i,t}=0$ otherwise.
A feasible \textbf{action} at stage $t$, $\v{a}_t=(a_{1,t},\ldots,a_{N,t})$, is an
$N$-dimensional vector with $a_{i,t}
\le \gamma_{i,t}$ for every $i$.  Let $A_t$ denote the total number of vehicles charged at stage $t$, {\it i.e.},
 $A_t=\sum\nolimits_{i \in \mathcal {I}_t}
a_{i,t}$.  For a vehicle $i \in \mathcal {I}_t$, if it remains connected to charger $i$ at stage $t+1$,
its state {\bf evolves according to} $x_{i,t+1}=x_{i,t}-(1,a_{i,t})$.
For an empty charger $i \notin \mathcal {I}_t$, if a vehicle arrives at stage $t+1$
then $i \in \mathcal {I}_{t+1}$ and the state of charger $i$ becomes the initial state
of this vehicle.

At stage $t$, let $s_t \in \mathcal {S}$ denote the \textbf{state of grid}, where the set $\mathcal {S}$ is assumed to be finite.
The state of grid {incorporates all the currently available information on
all exogenous factors} that
have impacts on the cost associated with PHEV charging, such as the
level of renewable generation {and its prediction}, the system load {(excluding the PHEV charging load $A_t$) and its prediction}, and the current time.
The evolution of the state of grid depends on the current state $s_t$ and
the aggregate action\footnote{Note that charging a large number of vehicles may
influence the Independent System Operator's (ISO) dispatch and
reserve policy. To incorporate this type of impact, we allow the
evolution of the grid state to depend on the aggregate action in
general.} ${A}_t$. The \textbf{charging cost} at stage $t$ $C({A}_t,s_t)$ depends on the aggregate action $A_t$ and the state of grid $s_t$.

 At stage $t$, let $d_t \in \mathcal {D}$ be the \textbf{state of
demand}, where $\mathcal {D}$ is assumed to be finite.
The state of
demand $d_t$ contains all the currently available information
on future PHEV arrivals, and completely determines the
joint probability distribution on the number of
arrival vehicles in the future and their initial states.
The state of
demand evolves as a time-homogeneous Markov chain, whose state
transition is assumed to be independent of the state of the grid $s_t$ and the
action $\v{a}_t$.

\old{ At every
location $m$, we assume that the number of arrival vehicles\footnote{We assume that at the
beginning of stage $t+1$, all vehicles with a departure time $t+1$
leave ``before'' the arrivals of vehicles with an arrival time
$t+1$. All arriving vehicles (tasks) are accepted until the capacity
limit $N^m$ is reached.} at stage
$t+1$ is a discrete random variable distributed according to $\eta_m(d_t)$, and
that the \textbf{initial states} of the arriving vehicles are
independently and identically drawn according to a probability
measure $\xi_m(d_t)$.

In what follows we discuss several important features of our model.

\begin{itemize}

\item In the above we have not specified how vehicles are {\bf admitted} to the system and when they will be {\bf disconnected}
from the charger.
Indeed, the results derived in this note hold regardless the admission and disconnection policy,
 as long as at every stage $t$, each vehicle $i$ that is not fully charged (with $\gamma_{i,t}>0$) remains
 connected to charger $i$, i.e., $i \in \mathcal{I}_t$ and therefore charger $i$ is not available for newly arrival vehicles.
 In other words, under any particular admission policy (that determines how many and which type of vehicles can be admitted to the system at every system state),
 all results derived in this note hold, provided that a vehicle $i$ is removed from the set $\mathcal{I}_t$ (disconnected from charger $i$)
 only if $\lambda_{i,t}=0$ (the vehicle's deadline is $t$) or $\gamma_{i,t}=0$ (the vehicle has been fully charged).

\item Our
model allows the charging cost to depend on the grid state
$s_t$, and therefore incorporates both the uncertainty on both the
 demand and supply sides. As an example, consider a case where
the $N$ chargers are connected to a single distribution feeder.
 The
state of grid $s_t$ can incorporate both the maximum capacity available for PHEV charging\footnote{Such capacity limit can be determined by the feeder
overload constraint, as well as the levels of renewable generation and aggregate load (excluding PHEV charging) at the feeder.} and the nodal price
at stage $t$.

 \item We have assumed that both the state of demand and the state of grid are Markov modulated processes.
 It is worth noting that although both
  $\{d_t\}_{t=1}^\infty$ and $\{s_t\}_{t=1}^\infty$ are assumed to be time-homogeneous Markov chains, the time dependency of arrival process and charging cost can be incorporated by
 including in $s_t$ and $d_t$ a periodic
Markov chain that describes time evolution.

\end{itemize}

  The following simple example illustrates the state evolution of a single PHEV.
\begin{example}
At stage $0$, a vehicle $i$ arrives at a charging facility. It will leave at stage $\beta_i=8$, and requires to be
charged for $5$ time units. Its initial state $x_{i,0}$ is $(8,5)$.
Suppose that it is charged at stage $0$, i.e., $a_{i,0} = 1$. Its state at
stage $1$ $x_{i,1}$ then becomes $(7,4)$. Suppose that vehicle $i$'s
state at stage $7$ is $(1,2)$, and $a_{i,7}=1$, i.e., it is charged
at stage $7$. At the beginning of stage $8$, vehicle $i$ leaves
the charging facility, and is removed from the set $\mathcal
{I}_8$. However, vehicle $i$'s charging request is not satisfied
at its departure: one additional time unit of charging is needed to
satisfy its request. In Section \ref{sec:DP4} we will introduce a
penalty function that captures the cost (e.g., the environmental
damage) associated with customers' uncompleted charging requests.
\end{example}}

\section{Dynamic Programming Formulation}\label{sec:DP}

In this section,  we formulate the
scheduling problem as an infinite-horizon dynamic program
(DP) by introducing its state space, admissible action set, transition
probabilities, stage cost, and average-cost objective function.

At each stage $t$, the \textbf{system state}, $\v{x}_t$, consists of
the states of all chargers, $\{x_{i,t}\}_{i=1}^N$, the state of grid
$s_t$, and the state of demand $d_t$. Let $\mathcal {X}$ denote the set of all possible
system states. Note that the size of state space grows exponentially with the number of
chargers, $N$.
Reasonable values of $N$, $B$, and $E$ lead to very high dimensions,
and make a direct solution to the DP impossible. We use $U_t(\v{x}_t)$ to denote the set of feasible
actions at stage $t$ under system state $\bx_t$.

The \textbf{transition probability} of the system state depends on the current system state,
$\v{x}$, and the current action $\v{a}_t$.\footnote{{Note that
while the evolution of vehicles' states certainly depends on
the action vector $\v{a}_t$, the evolution of $s_t$ depends only on the aggregate action $A_t$,
and the evolution of $d_t$ is completely exogenous.}} Since the state
transition is independent of the stage index $t$, we use
{$p_{\v{x},\v{y}} (\v{a}_t)$ to denote the transition probability from
state $\v{x}$ to $\v{y}$,} under the action $\v{a}$.

 At each stage $t$, the \textbf{stage cost}
$g(\v{x}_t,\v{a}_t)$ consists of two parts: the
charging cost $C({A}_t,s_t)$ and the non-completion penalty.
Let $\mathcal {J} (\v{x}_t)$
denote the set of vehicles that will leave at stage $t+1$, i.e.,
$\mathcal {J} (\v{x}_t) =\left\{ j  \in \mathcal{I}_t:
\lambda_{j,t}=1 \right\}.$
The stage cost
 function at stage $t$ is\footnote{The state $s_t$ can incorporate the maximum capacity constraint on all the $N$ PHEV chargers
by including an element $c_t$ such that the charging cost becomes higher
than the highest possible non-completion penalty $Nq(E)$ if $A_t > c_t$. On the other hand,
our formulation omits the power flow constraints within a distribution network and cannot incorporate capacity constraints on any subset of the $N$ chargers.}
\begin{equation}\label{eq:cost}
g(\v{x}_t,\v{a}_t)=
C({A}_t,s_t) + \sum\nolimits_{j \in \mathcal {J}(\v{x}_t) }
q(\gamma_{j,t}-a_{j,t}),
\end{equation}
where the penalty function $q: \Z_+ \to [0,\infty)$ with $q(0)=0$ maps
the number of uncharged battery units to its non-completion penalty (resulting from greenhouse gas emission
or/and customers' inconvenience). Since both the set of
system states and the set of feasible actions are finite, the stage
cost is bounded.

{A feasible {\bf policy} $\pi=\{\nu_0,\nu_1,\ldots,\}$ is a sequence of
decision rules such that $\nu_t(\v{x}_t) \in
U_t(\v{x}_t)$ for every $t$ and $\bx_t$.}
Given an initial system state $\v{x}_0$, the time-averaged cost
 achieved by a policy
$\pi$ is given by
\begin{equation}
J_{\pi}(\v{x}_0) \buildrel \Delta \over
 =  \mathop {\limsup }\limits_{T \to \infty }
 \frac{1}{T} \E_\pi\left\{ {\sum\nolimits_{t = 0}^{T - 1} {g(\v{x}_t,
\nu_t(\v{x}_t))} } \right\}, \label{eq:costJ}
\end{equation}
where the expectation is over the distribution of future system
state $\{\v{x}_t\}_{t=1}^{T-1}$ (induced by the policy $\pi$).
{Since the state evolution of the formulated DP does not depend on the
time index and} the state space is finite, there
exists an optimal stationary policy $\pi^*=\{\mu^*,\mu^*,\ldots\}$,
and the limit on the right hand side of \eqref{eq:costJ} exists \cite{B11}.

 \old{We write the minimum average cost
starting from state $\v{x}$ as
\begin{equation}\label{eq:costJS}
J_{\mu^*}(\v{x})=\mathop {\limsup }\limits_{T \to \infty }
\frac{1}{T} \E_\pi\left\{ {\sum\nolimits_{t = 0}^{T - 1} {g(\v{x}_t,
\mu^*(\v{x}_t))} }  \, \Big | \, \v{x}_0=\bx  \right\}.
\end{equation}}

{Next we give an illustrative example of the general DP framework constructed above.}

\begin{example}
Our formulation incorporates the
 objective of minimizing load variance (that has been extensively explored in the literature \cite{MCH10,GTL11}).
{In this special case, the state of grid $s_t$ is set to be the net system load ({\it i.e.}, the difference between system load and renewable generation) excluding PHEV charging.
The charging cost is given by:}
 $$
 C(A_t,s_t) = H(A_t+s_t),
 $$
 where $H(\cdot)$ is a {strictly} convex function that maps the total (net) system load to generation cost;
 a commonly used cost function is quadratic, e.g., $H(x)=x^2$ \cite{MCH10,GTL11}.
 Note that if the {incremental non-completion penalty $q(n)-q(n-1)$ is
 set to be larger than the incremental charging cost $C(A_t,s_t) - C(A_t-1,s_t)$, for every $n \ge 1$, $A_t \ge 1$, and $s_t \in \mathcal{S}$,
 the deadline requirement of each vehicle becomes a ``hard constraint'', in that it is optimal to fulfill all charging requests before their deadlines, as long as it is feasible
 to do so.} $\hfill \blacksquare$
\end{example}

\begin{remark}
{Although the state of grid $s_t$ and the state of demand $d_t$
are modeled as stationary Markov chains,
it is worth noting that the time dependency of the grid status (e.g., renewable generation and system load) and PHEV arrivals can be incorporated by
 including in the states $s_t$ and $d_t$ a periodic
Markov chain that describes the evolution of local time.}

{The DP framework constructed in Sections \ref{sec:model}
and \ref{sec:DP} is general. The only conditions
on $s_t$ required by the LLLP principle (that will be formally stated and proved in Theorem \ref{prop:update})
 are: i) the charging cost at each stage $t$ is of the form $C(A_t,s_t)$,
which depends only on the aggregate action $A_t$ and $s_t$,
and ii) the evolution of $s_t$ depends only on $A_t$ (but not on $a_{i,t}$ for any $i$). In other words,
the LLLP principle holds regardless of the detailed model used by the operator to describe the power grid dynamics
(e.g., information included in the state $s_t$, its evolution,
and the exact form of charging cost).}  $\hfill \blacksquare$
\end{remark}

\section{The LLLP Principle}\label{sec:op}

In this section we establish the main result of this technical note. In
Section \ref{sec:op1}, we first define a partial order over the set
of vehicle states: a vehicle with less laxity and
a longer remaining processing time has a higher-order state. For any
given (possibly non-stationary) heuristic policy that violates the
LLLP principle, we construct an interchanging policy that gives
priority to the vehicle with a higher-order state. We show that
on every sample path, the interchanging policy can only
reduce the ex-post (realized) cost, compared with the original heuristic (cf.
Theorem \ref{prop:update}). In Section \ref{sec:op3}, under some mild
assumptions on the evolutions of the grid state $s_t$ and the state
of demand $d_t$,
 we show the existence
of an optimal stationary policy that follows the LLLP principle.

\subsection{LLLP-based Interchanging Policy}\label{sec:op1}

 For every vehicle $i \in
\mathcal{I}_t$, its {\bf laxity} (at stage $t$) is defined by
\begin{equation}\label{eq:th}
\theta_{i,t}=  \left \{ \begin{array}{ll}
\lambda_{i,t}-\gamma_{i,t}, & {\rm if}\;\;
\gamma_{i,t} >0, \\
B, & {\rm if}\;\; \gamma_{i,t} = 0.
\end{array} \right .
\end{equation}
Note that for a vehicle $i$ with $\gamma_{i,t}>0$, its laxity
$\theta_{i,t} \in \{ 1-E,2-E,\ldots,B-1\}$ is the maximum
number of stages it can tolerate before the time it has to
be put on uninterrupted battery charging.
We are now ready to define a partial order over the set of all
possible vehicle states.

\begin{definition}\label{Def:part}
For two vehicles $i, \, j \in \mathcal{I}_t$, we say $i \preccurlyeq j$ (vehicle
$j$ has priority over $i$) if $j$ has less laxity and longer remaining processing time, i.e.,
$\theta_{i,t} \ge \theta_{j,t}$,
$\gamma_{i,t} \le \gamma_{j,t}$, and at least one of these two inequalities strictly holds. $\hfill \blacksquare$
\end{definition}

It is not hard to check that the relation $\preccurlyeq$ is reflexive,
antisymmetric, and transitive, and therefore is a partial order. We
also note that if vehicle $j$ has less laxity and a later
deadline than vehicle $i$, i.e., if $\theta_{i,t} \ge
\theta_{j,t}$ and $\lambda_{i,t} \le \lambda_{j,t}$, then we must
have $i \preccurlyeq j$.

At a system state $\bx_t$,  two vehicles $i$ and $j$ are
 {\it incomparable}, if
 $\theta_{i,t} \ge
\theta_{j,t}$ and $\gamma_{i,t} > \gamma_{j,t}$, or $\theta_{i,t} >
\theta_{j,t}$ and $\gamma_{i,t} \ge \gamma_{j,t}$. In this case,
which vehicle should have higher priority depends
on future system dynamics.
On the other hand, if
vehicle $j$ has priority over vehicle $i$, we argue that priority should always be given to
vehicle $j$, regardless of future system dynamics. This result requires the penalty function to be convex, as stated in the following assumption.
\begin{assumption}\label{A:convex}
The {incremental non-completion penalty is non-negative and non-decreasing},  i.e.,
$$
0 \le q(n)-q(n-1) \le q(n+1) - q(n),\;\;\; n=1,2,\ldots.
$$
\end{assumption}

The non-completion penalty may come from the inconvenience caused to customers
 as well as the potential
environmental damage caused by the emission of PHEVs' combustion
engines. We note that environmental damage is usually considered to be convex
with respect to greenhouse gas emission \cite{B12}.

\begin{definition}[An LLLP-based Interchanging Policy] \label{def:update}
Suppose that at some system state $\bx_t$, vehicle $j$ has priority over
$i$, and that a policy $\pi=\{\nu_0,\nu_1,\ldots\}$ charges vehicle
$i$ but not $j$. Let $W \buildrel \Delta \over =
\max\{\lambda_{i,t}, \lambda_{j,t}\}-1$.
 We now formally define the \textbf{interchanging policy} $\bar
\pi=\{\nu_0,\ldots,\nu_{t-1},\bar \nu_t,\bar
\nu_{t+1},\ldots\}$ (generated from the policy $\pi$ with respect to
vehicles $i$ and $j$ at state $\bx_t$) as follows.
\begin{enumerate}
\item  We first let $\bar \nu_k = \nu_k$ for $k\ge t$,
 and then update the sequence of decision rules $\{\bar \nu_k \}_{k=t}^{t+W}$ as follows.

 \item Policy
$\bar \pi$ charges $j$ instead of $i$ at state $\bx_t$. That
is, $\bar \nu_t(\bx_t)$ is the same as $\nu_t(\bx_t)$ except
that its $i$th component is $0$ and its $j$th component is $1$.

 Following the state
$\bx_{t}$, for any (realized) sequence of system states that would occur
with positive probability under the policy $\pi$,
$\{\v{x}_k\}_{k=t+1}^{t+W}$, there exists a corresponding
sequence of system states following the state-action pair
$(\bx_{t},\bar \nu_t(\bx_t))$, $\{\hat
{\v{x}}_k\}_{k=t+1}^{t+W}$. {The corresponding state $\hat
{\v{x}}_k$ differs from ${\v{x}}_k$ only on the states of vehicles
$i$ and $j$: $\hat {\gamma}_{i,k} = \gamma_{i,k}+1$ for
$k=t+1,\ldots,\beta_i-1$, and $\hat {\gamma}_{j,k} = \gamma_{j,k}-1$
for $k=t+1,\ldots,\beta_j-1$.}

\item For every $\{\v{x}_k\}_{k=t+1}^{t+W}$, let
$\mathcal{G}(\{\v{x}_k\}_{k=t+1}^{t+W}) \subseteq
\{t+1,\ldots, \min\{\beta_i, \beta_j\}-1 \}$ be the set of stages that policy $\pi$
charges vehicle $j$ but not $i$, before vehicle
$i$'s departure.
 If the set
$\mathcal{G}(\{\v{x}_k\}_{k=t+1}^{t+W})$ is empty, let\footnote{Lemma \ref{L:update} shows that
 for $k=t+1,\ldots,t+W$, whenever the policy $\pi$ charges vehicle $j$
at state $\bx_k$, it is feasible to charge vehicle $j$ at the
corresponding state $\hat \bx_k$, i.e., $\hat \gamma_{j,k}\ge1$.}
$\bar \nu_{k}(\hat {\v{x}}_k)= \nu_{k}( {\v{x}}_k)$, for
$k=t+1,\ldots,t+W$, i.e., the interchanging policy $\bar \pi$ agrees with the original policy
$\pi$ after stage $t$.

If $\mathcal{G}(\{\v{x}_k\}_{k=t+1}^{t+W})$ is not empty,
let $w$ be its minimal element. At stages $k=t+1,\ldots,w-1$,
let
$\bar \nu_{k}(\hat {\v{x}}_k)= \nu_{k}( {\v{x}}_k)$. At stage
$w$, policy $\bar \pi$ charges vehicle $i$ instead of $j$,
i.e., $\bar \nu_w (\hat \bx_w)$ is the same as $\nu_w (\bx_w)$
except that its $i$th component is $1$ and its $j$th component is
$0$. $\hfill \blacksquare$
\end{enumerate}
\end{definition}

\begin{lemma}\label{L:update}
An interchanging policy $\bar \pi$ is feasible.
\end{lemma}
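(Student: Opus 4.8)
The plan is to verify directly that every action prescribed by $\bar\pi$ satisfies the only feasibility requirement of the model, namely $a_{i,k}\in\{0,1\}$ with $a_{i,k}\le\gamma_{i,k}$ (a vehicle is charged only while it is connected and still needs charging). First I would record a structural fact already built into Definition \ref{def:update}: $\bar\pi$ and $\pi$ induce the same aggregate action $A_k$ at every stage. Indeed, at stage $t$ one of $i,j$ is charged either way; on $\{t+1,\dots,w-1\}$ and after $w$ the action vectors agree; and at the swap-back stage $w$ the roles of $i$ and $j$ are merely exchanged. Since $s_k$ evolves only through $A_k$ and $d_k$ is exogenous, the trajectory $\hat{\bx}_k$ differs from $\bx_k$ only in the coordinates of vehicles $i$ and $j$. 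This reduces the lemma to checking the actions on these two vehicles, because for every other charger the state and action are unchanged and feasibility is inherited from $\pi$.

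For vehicle $i$ this is immediate: throughout the window $\hat\gamma_{i,k}=\gamma_{i,k}+1$, so whenever $\bar\pi$ charges $i$ — whether by copying a feasible action of $\pi$ or at stage $w$ — we have $\hat\gamma_{i,k}\ge 1$, and $i$ is still connected since $w<\min\{\beta_i,\beta_j\}\le\beta_i$. Charging $j$ at stage $t$ is feasible because the priority $i\preccurlyeq j$ gives $\gamma_{j,t}\ge\gamma_{i,t}\ge 1$, the last inequality holding because $\pi$ charges $i$. The substance of the lemma is therefore the feasibility of charging $j$ at the intermediate stages, where $\bar\pi$ has front-loaded one extra unit of charging onto $j$.

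My key reduction is that per-stage feasibility for $j$ follows from a single global bound: if $\bar\pi$ charges $j$ at most $\gamma_{j,t}$ times in total over $\{t,\dots,\beta_j-1\}$, then at any stage $k$ where it charges $j$ the number of prior charges is at most $\gamma_{j,t}-1$, whence $\hat\gamma_{j,k}\ge 1$. When $\mathcal{G}$ is nonempty this bound is automatic: the swap-back removes exactly one charge of $j$ (at $w$) to offset the extra charge inserted at $t$, so the total number of $j$-charges under $\bar\pi$ equals that under $\pi$, which is at most $\gamma_{j,t}$ by feasibility of $\pi$.

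The main obstacle is the case $\mathcal{G}=\emptyset$ with $\beta_i<\beta_j$, where no swap-back occurs and the local argument ``$k\notin\mathcal{G}$ forces $\pi$ to charge $i$ whenever it charges $j$'' is unavailable after $i$ departs at $\beta_i$. Here I would bound the total number of times $\pi$ charges $j$ on $\{t+1,\dots,\beta_j-1\}$ by splitting at $\beta_i$: before $\beta_i$ every charge of $j$ coincides with a charge of $i$, of which at most $\gamma_{i,t}-1$ occur after stage $t$; after $\beta_i$ at most $\beta_j-\beta_i$ stages remain. Thus $\pi$ charges $j$ at most $(\gamma_{i,t}-1)+(\beta_j-\beta_i)$ times, and adding the single extra charge at $t$ keeps $\bar\pi$'s count at or below $\gamma_{j,t}$ precisely when $\gamma_{i,t}+(\beta_j-\beta_i)\le\gamma_{j,t}$, i.e. when $\theta_{j,t}\le\theta_{i,t}$. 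This is exactly the less-laxity half of $i\preccurlyeq j$, so the bound holds and $j$ is never over-charged. It is worth emphasizing that this is where the laxity ordering does the work; convexity of $q$ plays no role in feasibility.
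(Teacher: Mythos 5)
Your argument is correct, but it takes a genuinely different route from the paper's. The paper proves per-stage feasibility by propagating dynamic invariants along the trajectory: when $\mathcal{G}\neq\emptyset$ (and likewise when $\mathcal{G}=\emptyset$ with $\beta_j\le\beta_i$), it shows that the gap $\gamma_{j,k}>\gamma_{i,k}$ created at stage $t$ persists up to the swap-back, so that whenever $\pi$ charges $j$ (and hence also $i$) one has $\gamma_{j,k}>\gamma_{i,k}\ge 1$ and thus $\hat\gamma_{j,k}\ge 1$; in the delicate case $\mathcal{G}=\emptyset$ with $\beta_i<\beta_j$ it instead tracks laxity, showing $\theta_{j,k}<0$ for all $k\ge\beta_i$, which forces $\gamma_{j,k}\ge\lambda_{j,k}+1\ge 2$ after vehicle $i$ departs. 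You replace both invariants by a single global counting reduction: per-stage feasibility for $j$ follows once $\bar\pi$ charges $j$ at most $\gamma_{j,t}$ times over $\{t,\ldots,\beta_j-1\}$, and you verify this bound by charge conservation under the swap-back when $\mathcal{G}\neq\emptyset$, and by splitting the count at $\beta_i$ and invoking $\theta_{j,t}\le\theta_{i,t}$ (equivalently $\gamma_{i,t}+\beta_j-\beta_i\le\gamma_{j,t}$) when $\mathcal{G}=\emptyset$ and $\beta_i<\beta_j$. Your route is more elementary in that it needs no statement about how the states of $i$ and $j$ co-evolve stage by stage, and it makes explicit exactly which half of the priority relation $i\preccurlyeq j$ is doing the work in each case (the processing-time half at stage $t$ and in the swap-back case, the laxity half in the no-swap-back case); the paper's invariant approach yields the slightly stronger pointwise fact that $\gamma_{j,k}\ge 2$ at every stage where $j$ is charged, though the lemma does not need it.

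One omission to repair: your case analysis covers $\mathcal{G}\neq\emptyset$ and $\mathcal{G}=\emptyset$ with $\beta_i<\beta_j$, but never states the case $\mathcal{G}=\emptyset$ with $\beta_j\le\beta_i$. It is immediate from the first half of your splitting argument: there, every charge of $j$ by $\pi$ in $\{t+1,\ldots,\beta_j-1\}$ coincides with a charge of $i$, of which there are at most $\gamma_{i,t}-1$ after stage $t$, so $\bar\pi$'s total count is at most $1+(\gamma_{i,t}-1)=\gamma_{i,t}\le\gamma_{j,t}$, this time using the processing-time half of $i\preccurlyeq j$. Add that sentence and the proof is complete.
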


The proof of Lemma \ref{L:update} is given in Appendix A, where we show that the action taken by policy $\bar \pi$ at every system state is feasible.

\begin{theorem}[The LLLP Principle]\label{prop:update}
Suppose that Assumption \ref{A:convex} holds \blue{and
that, at some system state $\bx_t$, vehicle $j$ has priority over $i$ in the sense of Definition
\ref{Def:part}.}
Let $\bar \pi$ be
the interchanging policy generated from a policy $\pi$ \blue{according to Definition \ref{def:update}}. For every $T \ge \max\{\lambda_{j,t},\lambda_{i,t}\}-1$
and along every
sample path from stage $t+1$ through stage $t+T$,
 the total (realized) cost resulting from the interchanging policy
$\bar \pi$ cannot be higher than that achieved by the original policy $\pi$.
$\hfill \square$
\end{theorem}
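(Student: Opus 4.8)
The plan is to compare $\bar\pi$ and $\pi$ stage by stage along the coupled pair of sample paths $\{\bx_k\}$ and $\{\hat{\bx}_k\}$ supplied by Definition \ref{def:update}, and to exploit the fact that the interchange merely permutes which of $i,j$ is served. First I would observe that at every stage $\bar\pi$ serves the same total number of vehicles as $\pi$: at stage $t$ it replaces a charge of $i$ by a charge of $j$, on the intermediate stages it copies $\pi$'s action vector at the relabelled state $\hat\bx_k$, and (when $\mathcal{G}\neq\emptyset$) it hands the charge back at stage $w$. Hence $A_k$ is identical under both policies for all $k$, so the grid state $s_k$ evolves identically along the coupled paths and every charging-cost term $C(A_k,s_k)$ cancels. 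Feasibility of $\bar\pi$ is already granted by Lemma \ref{L:update}. The comparison therefore collapses to the non-completion penalties, and since $\hat\bx_k$ differs from $\bx_k$ only in the $\gamma$-coordinates of $i$ and $j$ while the $\lambda$-coordinates (and hence the departing set $\mathcal{J}(\cdot)$) are unchanged, only the penalties charged to $i$ and $j$ can ever differ.

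I would then follow the case split of Definition \ref{def:update}. When $\mathcal{G}\neq\emptyset$, the extra charge handed to $j$ at stage $t$ is returned to $i$ at the minimal stage $w$ at which $\pi$ serves $j$ but not $i$; a one-line state computation gives $\hat\gamma_{i,w+1}=\gamma_{i,w+1}$ and $\hat\gamma_{j,w+1}=\gamma_{j,w+1}$, so the trajectories recouple at $w+1$. Since $w<\min\{\beta_i,\beta_j\}$, both vehicles are still present and all their penalties accrue after recoupling, so the realized costs are exactly equal and there is nothing to prove.

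The substance is the case $\mathcal{G}=\emptyset$, where the interchange is never undone. By the construction $\bar\pi$ leaves $i$ with one more uncharged unit and $j$ with one fewer throughout, so writing $n_i$ and $n_j$ for the uncharged units of $i$ and $j$ under $\pi$ at their deadlines, the only change in total cost is $[q(n_i+1)-q(n_i)]-[q(n_j)-q(n_j-1)]$. By Assumption \ref{A:convex} the increments $q(\cdot)-q(\cdot-1)$ are non-decreasing, so it suffices to prove the combinatorial inequality $n_i+1\le n_j$; note this simultaneously forces $n_j\ge 1$, so that $q(n_j-1)$ is well defined, consistent with the feasibility claim underlying Lemma \ref{L:update}.

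Establishing $n_i+1\le n_j$ is the main obstacle, and it is precisely where both halves of the priority relation are consumed. Writing $C_i,C_j$ for the numbers of charges $\pi$ gives $i$ and $j$ over their lifetimes, so $n_i=\gamma_{i,t}-C_i$ and $n_j=\gamma_{j,t}-C_j$, the goal is equivalent to $C_j-C_i\le(\gamma_{j,t}-\gamma_{i,t})-1$. Emptiness of $\mathcal{G}$ means that on $[t,\min\{\beta_i,\beta_j\}-1]$ every stage charging $j$ also charges $i$, while stage $t$ charges $i$ but not $j$, so $j$'s charges are strictly dominated by $i$'s on this common window. I would then split on the departure order: if $\beta_i\le\beta_j$, the only charges of $j$ outside the window are the at most $\beta_j-\beta_i=\lambda_{j,t}-\lambda_{i,t}$ charges after $i$ leaves, and the laxity condition $\theta_{i,t}\ge\theta_{j,t}$ (equivalently $\lambda_{j,t}-\lambda_{i,t}\le\gamma_{j,t}-\gamma_{i,t}$) converts this into the desired bound; if $\beta_j<\beta_i$, all of $j$'s charges lie in the window and the processing-time condition $\gamma_{i,t}\le\gamma_{j,t}$ alone suffices. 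Combining the two cases with convexity finishes the proof; I expect the bookkeeping around departure order and the boundary stages (in particular $\lambda_{i,t}=1$, where $i$'s penalty already accrues at stage $t$) to be the only delicate points.
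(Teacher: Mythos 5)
Your proposal is correct and follows essentially the same route as the paper's proof: couple the trajectories of $\pi$ and $\bar\pi$ via the equal aggregate actions (so all charging costs and the penalties of other vehicles cancel), split on whether $\mathcal{G}$ is empty (exact equality of realized costs when it is not, via recoupling at stage $w+1$), and in the remaining case apply Assumption \ref{A:convex} to the two penalty terms of $i$ and $j$. The only difference is that you supply an explicit counting argument, split on the departure order of $i$ and $j$, for the key inequality $n_i+1\le n_j$ (the paper's claim $0\le\rho_i<\rho_j$), a step the paper dismisses as ``straightforward to check'' from Definition \ref{def:update}; your verification of it is correct.
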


\begin{proof}
 It follows
from Definition \ref{def:update} that the interchanging policy always
charges an equal number of vehicles as the original policy.
Since the evolution of
 $\{s_t\}$ depends only on the total
number of charged vehicles at each stage $t$, the two policies result in the same system dynamics.
As a result, following the state $\bx_t$, for every sequence of system states
that would occur with positive probability under the policy $\pi$,
$\{\v{x}_k\}_{k=t+1}^{t+W}$, there exists a corresponding sequence of
system states, $\{\bar {\v{x}}_k\}_{k=t+1}^{t+W}$, which will occur
with equal probability under policy $\bar \pi$.
If the set\footnote{The set
$\mathcal{G}(\{\v{x}_k\}_{k=t+1}^{t+W})$ and the sequence $\{\hat
{\v{x}}_k\}_{k=t+1}^{t+W}$ are introduced in Definition
\ref{def:update}.}
$\mathcal{G}(\{\v{x}_k\}_{k=t+1}^{t+W})$ is empty, then $\{\bar {\v{x}}_k\}_{k=t+1}^{t+W} =\{\hat
{\v{x}}_k\}_{k=t+1}^{t+W} $; otherwise, we have
 $$
 \{\bar {\v{x}}_k\}_{k=t+1}^{t+W} =\{\hat
{\v{x}}_{t+1},\ldots,\hat {\v{x}}_w, {\v{x}}_{w+1},\ldots, {\v{x}}_{t+W}
\},
$$
where $w$ be the minimum element in the set
$\mathcal{G}(\{\v{x}_k\}_{k=t+1}^{t+W})$.

We further note that the two policies, $\pi$ and $\bar \pi$, are identical
after stage $t+W$. As a result, to prove this theorem, it suffices to
show that for every realization of system states under the policy
$\pi$, $\{\v{x}_k\}_{k=t+1}^{t+W}$, and the corresponding realization of
system states under the policy $\bar \pi$, $\{\bar
{\v{x}}_k\}_{k=t+1}^{t+W}$,
\begin{equation}\label{eq:update2}
\begin{array}{l}
 \displaystyle g(\v{x}_t,\bar \nu_t(\v{x}_t)) +
{\sum\nolimits_{k = t+1}^{t+W}{
g(\bar {\v{x}}_k,  \bar \nu_k(\bar {\v{x}}_k))} }\le g(\v{x}_t, \nu_t(\v{x}_t)) +
{\sum\nolimits_{k = t+1}^{t+W} { g( {\v{x}}_k, \nu_k( {\v{x}}_k))}  }.
\end{array}
\end{equation}
We now prove Eq. \eqref{eq:update2} by discussing  the following two
cases:

\noindent {\rm 1.}
{If $\mathcal{G}(\{\v{x}_k\}_{k=t+1}^{t+W})$ is not empty,
 for every pair of  system state realizations,}
$\{{\v{x}}_k\}_{k=t+1}^{t+W}$ and $\{\bar {\v{x}}_k\}_{k=t+1}^{t+W}$,
 both polices must result in the same ex-post cost, i.e., the equality holds in
\eqref{eq:update2}.

\noindent {\rm 2.} If $\mathcal{G}(\{\v{x}_k\}_{k=t+1}^{t+W})$
is empty, then {whenever $\pi$ charges $j$, it must also charge
$i$, for $k=t+1,\ldots,\min\{\beta_i, \beta_j\}-1$.}
For a sequence of system states realized under $\pi$, $\{{\v{x}}_k\}_{k=t+1}^{t+W}$, let
$\rho_{i}$ denote the remaining processing time of vehicle
$i$ at its deadline $\beta_i$.
{That is, $\rho_{i} \buildrel \Delta \over =
\gamma_{i,\beta_i-1}-a_{i,\beta_i-1}$, where $a_{i,\beta_i-1}$ is
the action on vehicle $i$ at state $\bx_{\beta_i-1}$ according to
policy $\pi$. }Similarly, $\bar {\rho}_{i}$ is defined
for the corresponding sequence
$\{\bar {\v{x}}_k\}_{k=t+1}^{t+W}$ under the interchanging policy $\bar \pi$.
{Since $j$ has priority over $i$  at $\bx_t$,
it is straightforward to check (from the definition of $\bar \pi$ in Definition \ref{def:update})
that $0 \le \rho_{i} <\rho_{j}$, $\bar \rho_{j}=\rho_{j}-1$, and $\bar
\rho_{i}=\rho_{i}+1$.}
 It follows from Assumption
\ref{A:convex} that
$q(\bar \rho_{j}) +q(\bar \rho_{i}) \le q( \rho_{j}) +q(
\rho_{i})$.
Note that this inequality implies the desired result in \eqref{eq:update2},
since the two policies, $\pi$ and $\bar \pi$, result in the
same cost except possible different penalties for not fulfilling
vehicle $i$'s and $j$'s charging requests.
\end{proof}

{Before ending this section we make some brief discussion on the intuition behind the LLLP principle}.
We consider a simple {\bf two-vehicle example}. At stage $0$,
 the states of the two
vehicles are $x_{1,0}=(2,1)$ and $x_{2,0}=(3,2)$. Note that vehicle $2$ has priority to vehicle $1$, according to the LLLP principle.
If vehicle $1$ is charged at stage $0$, then the vehicle is fully charged (and not available for charging) at stage $1$;
on the other hand, if only vehicle $2$ is charged at stage $0$, then both vehicles are available for charging
at stage $1$.
The LLLP principle argues
that the latter situation is preferable, because
\begin{itemize}
\item in the latter situation, the operator has a larger set of
feasible actions at stage $1$;

\item under convex penalty functions,
remaining processing time should be split among multiple vehicles.
\end{itemize}

Under random charging cost and convex non-completion penalty, it is always desirable to have a larger number
 of smaller unfinished tasks that can be processed simultaneously when charging cost becomes lower
  in the future.

\begin{remark}
{Although an interchanging policy $\bar \pi$ cannot be worse than the original
heuristic, it may still be (sometimes obviously) suboptimal, since
it does not fully utilize the extra ``flexibility'' provided by the
LLLP principle. Note that $\bar \pi$ charges $i$ but not $j$
at stage $w$ (cf. Definition \ref{def:update}). A natural way to improve
 $\bar \pi$ is to charge both $i$ and $j$ at stage $w$ under certain circumstances, e.g.,
when the laxity of vehicle $j$ is small or it is cheap to charge an additional vehicle at stage $w$.}

{This intuition can be illustrated by the aforementioned two-vehicle example. The charging cost at stage $0$,
$1$, $2$ is $A_0$, $0$, and $2A_2$, respectively (here $A_t$ denotes the number of vehicles
charged at stage $t$). Consider an EDF policy $\pi$ that charges vehicle $1$ at stage $0$,
and charges vehicle $2$ at stages $1$ and $2$. According to Definition \ref{def:update},
 $t=0$ (when policy $\pi$ violates the LLLP principle) and $w=1$.
The unique optimal policy gives priority to vehicle $2$ at stage $t$ (following the LLLP principle), and charges both vehicles at stage $w$.}    $\hfill \blacksquare$
\end{remark}

\subsection{Optimality of the LLLP Principle}\label{sec:op3}

In this subsection, we show the existence of an optimal
 stationary policy that always follows the LLLP principle.
The following technical assumption is made to guarantee {that
the minimum average cost
does not depend on the initial state.}

\begin{assumption}\label{A:ini}
We assume the following.
\begin{itemize}
\item [{\rm \ref{A:ini}.1.}] Under every $d \in \mathcal{D}$, there is positive probability
that no vehicle arrives at the next stage.

\item [{\rm \ref{A:ini}.2.}] There exists a special state of grid $\bar s \in
\mathcal{S}$ such that for some positive integer $m$,  for every
initial state $s_0 \in \mathcal{S}$, and under the
sequence of zero aggregated charging decisions,
$\{A_0=0,A_1=0,\ldots,A_{m-1}=0\}$, state $\bar s$ is visited
with positive probability at least once within the first $m$ steps.

\item [{\rm \ref{A:ini}.3.}]
 The state of demand $\{d_t\}$ evolves as an ergodic Markov chain.
\end{itemize}
\end{assumption}

Note that Assumption \ref{A:ini}.2 holds if without PHEV charging,
(e.g., in a model with $N=0$), the state of grid evolves as an
ergodic Markov chain. Although Assumption \ref{A:ini}.3 requires that
$\{d_t\}$ is ergodic,
the  time dependency of PHEV arrival process can be incorporated by
 including in the state $d_t$ a periodic
Markov chain that describes time evolution.

\begin{lemma}\label{prop:ini}
Suppose that Assumption \ref{A:ini} holds. The minimum average cost
is equal for all initial states, i.e.,
\begin{equation}\label{eq:ini}
\lambda \buildrel \Delta \over = J_{\mu^*}(\v{x}),\;\;\;\;\forall\;
\v{x} \in \mathcal{X},
\end{equation}
where $J_{\mu^*}(\v{x})$ is \blue{the (minimum) average cost achieved
by an optimal stationary policy $\mu^*$ (cf. its definition in Eq. \eqref{eq:costJ}).}
\end{lemma}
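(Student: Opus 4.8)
The plan is to reduce Lemma \ref{prop:ini} to a standard result for finite-state average-cost dynamic programs: if the model is \emph{weakly communicating} (equivalently, if it satisfies the weak accessibility condition), then the optimal average cost is the same for every initial state (see \cite{B11}). To invoke this, I would exhibit a single \textbf{reference state} $\bar{\v{x}} \in \mathcal{X}$ in which all chargers are empty (so the charger sub-state is $(0,0)$ for every $i$), the state of grid equals the special state $\bar s$ of Assumption \ref{A:ini}.2, and the state of demand equals some fixed $\bar d \in \mathcal{D}$; I would then show that $\bar{\v{x}}$ lies in a communicating core $\mathcal{X}_c$ from which, and into which, every other relevant state can travel, while the remaining states are transient under \emph{every} stationary policy.

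The first and main step is to show that $\bar{\v{x}}$ is accessible (reachable with positive probability in finite time) from an arbitrary initial state $\v{x}$. Three facts combine here. First, by Assumption \ref{A:ini}.1 a no-arrival event has positive probability from every demand state, so a run of no-arrivals occurs with positive probability; since every vehicle stays at most $B$ stages (so $\lambda_{i,t}\le B$) and no new vehicle enters, all chargers become empty within $B$ stages regardless of the charging decisions taken during that run. Second, once the chargers are empty the only feasible action is $a_{i,t}=0$ for all $i$, hence $A_t=0$ is forced, and Assumption \ref{A:ini}.2 then guarantees that the grid state visits $\bar s$ within $m$ stages with positive probability. Third, by Assumption \ref{A:ini}.3 the exogenous demand chain is ergodic and reaches $\bar d$; crucially, the demand evolves independently of the grid and of the action, so the grid-to-$\bar s$ and the demand-to-$\bar d$ events factorize.

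The hard part is synchronization: accessibility of $\bar{\v{x}}$ requires all three components to sit at their target values at one common stage, whereas the grid and the demand are driven by independent chains that a priori hit their targets at unaligned times. I would resolve this using independence together with recurrence: under the forced zero-charging dynamics the grid returns to $\bar s$ infinitely often (Assumption \ref{A:ini}.2 applied repeatedly), and the no-arrival transitions admit a cycle that returns the demand to $\bar d$ while keeping the chargers empty (picking $\bar d$ in a recurrent class of the sub-chain of no-arrival transitions, whose existence follows from Assumption \ref{A:ini}.1 and finiteness of $\mathcal{D}$). Because these two sources of randomness are independent, one can then align a demand-return time with a grid-visit to $\bar s$ along a positive-probability event, using the freedom to hold the demand at $\bar d$ over arbitrarily many no-arrival cycles to match the recurrence times of $\bar s$ (a coupling/aperiodicity step, which is the delicate point). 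This yields positive-probability, finite-time accessibility of $\bar{\v{x}}$ from every state. The same no-arrival-emptying argument, applied inside any recurrent class $R$ of an arbitrary stationary policy $\pi$ (a recurrent class is closed, so the empty-charger, grid-$\bar s$, demand-$\bar d$ state it forces lies in $R$), shows that any state not mutually accessible with $\bar{\v{x}}$ cannot be recurrent under $\pi$; hence the complement of the core is transient under every policy. With the weak accessibility condition thus verified, the cited average-cost result delivers a common optimal gain $\lambda = J_{\mu^*}(\v{x})$ for all $\v{x} \in \mathcal{X}$, as claimed.
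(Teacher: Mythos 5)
Your proposal rests on the same probabilistic core as the paper's own proof: a reference state $\bar{\bx}$ with all chargers empty, grid at $\bar s$, demand at $\bar d$ (the paper's ``special system state'' \eqref{eq:spe}); a positive-probability no-arrival run (Assumption \ref{A:ini}.1) that empties every charger within $B$ stages regardless of the policy; the observation that empty chargers force $A_t=0$, so Assumption \ref{A:ini}.2 drives the grid to $\bar s$; ergodicity of the demand chain (Assumption \ref{A:ini}.3) to reach $\bar d$; and independence of the two exogenous chains. The packaging differs, though. The paper argues directly that $\bar{\bx}$ is visited with positive probability within the uniform horizon $L=B+\max\{m,|\mathcal{D}|\}$ from every initial state under \emph{every} policy, and then cites Proposition 7.4.1 of \cite{B11a} (so $\bar{\bx}$ is recurrent under every stationary policy); you instead reduce to the weakly-communicating/weak-accessibility theorem for finite average-cost MDPs, which obliges you to also exhibit a communicating core and prove that its complement is transient under every policy --- your closedness-of-recurrent-classes argument for that extra step is correct, but it is work the paper's shorter route avoids. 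The most valuable difference is that you explicitly flag the synchronization problem (grid at $\bar s$ and demand at $\bar d$ at the \emph{same} stage), which the paper's proof passes over with a bare appeal to independence even though it is a genuine issue: if the demand chain were merely irreducible but periodic, the alignment could fail for some initial phases and the lemma itself would be false, so aperiodicity (implicit in ``ergodic'') is exactly what is needed here. However, your sketched fix is not quite right as stated: you cannot ``hold the demand at $\bar d$ over arbitrarily many no-arrival cycles,'' because the demand chain is exogenous and moves on its own, and picking $\bar d$ in a recurrent class of a ``no-arrival sub-chain'' is unnecessary (ergodicity already makes every demand state recurrent). The clean repair is: iterate Assumption \ref{A:ini}.2 to obtain deterministic stages $B\le t_1<t_2<\cdots$ with consecutive gaps at most $m$ such that $\Pr(s_{t_k}=\bar s)>0$ under zero charging; by aperiodicity of the ergodic demand chain there exists $N_0$ with $\Pr(d_n=\bar d \mid d_0)>0$ for all $n\ge N_0$ and all $d_0$; take the first $t_k\ge N_0$ and intersect the three independent positive-probability events (no arrivals through stage $t_k$, grid at $\bar s$ at $t_k$, demand at $\bar d$ at $t_k$). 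This still yields a uniform finite horizon (now of order $\max\{B+m,\,N_0+m\}$ rather than the paper's $L$), after which either your weak-accessibility reduction or the paper's citation of Proposition 7.4.1 of \cite{B11a} completes the proof.
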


\begin{proof}
We pick up a state $\bar d \in \mathcal{D}$, and define a
\textbf{special system state}
\begin{equation}\label{eq:spe}
\bar {\bx} = \{(0,0),\ldots,(0,0), \bar s, \bar d\},
\end{equation}
where $\bar s \in \mathcal{S}$ is the special state of grid
defined in Assumption \ref{A:ini}.2. At this special system state,
all charging facilities are empty with state $(0,0)$.
According to Proposition 7.4.1 of \cite{B11a}, to
show the desired result we only need to argue that, for every initial
system state $\bx_0 \in \mathcal{X}$ and under all policies, the
special system state is visited with positive probability at least
once within the first $L \buildrel \Delta \over =
B+\max\{m,|\mathcal{D}|\}$ steps, with $m$ being the integer defined
in Assumption \ref{A:ini}.2.\footnote{{This special state
is recurrent in the Markov chain induced by every stationary policy \cite{B11a},
and therefore the minimum average cost with any initial state cannot be different
from that with the special state being the initial state. Due to the space limit we omit the detailed proof and readers can refer to the proof of Prop. 7.4.1 in \cite{B11a}.}}

Under Assumption \ref{A:ini}.1, the probability that no vehicle arrives within the first $L+1$
stages is positive, and if this is the case, all charging facilities
are empty from stage $B$ to stage $L$, regardless of the policy used
by the operator. In this case, since no vehicle is charged from stage $B$ to stage $L-1$,
Assumption \ref{A:ini}.2 implies that with positive probability
the special state $\bar s$ is visited at least once from stage $B$ to
stage $L$. Also, since $\{d_t\}$
is an ergodic Markov chain, the probability that the state
$\bar d$ is visited at least once from stage $B$ to stage $L$
is positive. Since the evolutions of $\{d_t\}$ and $\{s_t\}$ are
assumed to be independent, for all initial system states and under all
policies, the special system state $\bar {\bx}$ is visited with positive
probability at least once from stage $B$ to stage $L$.
\end{proof}

\begin{theorem}\label{prop:property}
Suppose that Assumptions \ref{A:convex} and \ref{A:ini} hold. There
exists an optimal stationary policy $\mu^*$ that always follows the
LLLP principle. That is, at every system state $\v{x} \in
\mathcal{X}$, if the $i$th component of $\mu^*(\v{x})$ is $1$
(vehicle $i$ is charged), then for every vehicle $j$ such that $i
\preccurlyeq j$ at $\v{x}$, the $j$th component of $\mu^*(\v{x})$
must also be $1$ (vehicle $j$ must be charged).
\end{theorem}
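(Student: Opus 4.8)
The plan is to combine the sample-path dominance of Theorem~\ref{prop:update} with the average-cost machinery set up by Lemma~\ref{prop:ini}. Since the state and action spaces are finite and Lemma~\ref{prop:ini} shows the minimum average cost is a constant $\lambda$ independent of the initial state, standard average-cost theory furnishes an optimal stationary policy together with a bounded relative-value (bias) function solving the average-cost optimality equation, and the optimal cost is attained by any stationary rule that selects a Bellman-minimizing action at every state. The key structural observation is that an LLLP swap --- charging $j$ instead of $i$ when $i\preccurlyeq j$ in the sense of Definition~\ref{Def:part} --- leaves the aggregate action $A_t$ unchanged, so by the model assumptions the charging cost $C(A_t,s_t)$ as well as the grid and demand transitions are untouched, and only the departing-vehicle penalties in \eqref{eq:cost} and the individual successor states of $i$ and $j$ can change. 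The whole question therefore reduces to showing that at each state some Bellman-minimizing action obeys the partial order, and the natural route is to start from an arbitrary optimal action and repeatedly \emph{promote} charging from a lower- to a strictly higher-priority vehicle until the charged set is up-closed under $\preccurlyeq$.

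The engine for a single promotion is Theorem~\ref{prop:update}, which I would exploit through the fact that the interchanging policy of Definition~\ref{def:update} is self-recoupling: it charges $i$ back at the first stage $w$ at which the original policy charges $j$ but not $i$, so the coupled $\pi$- and $\bar\pi$-trajectories reach a common system state by stage $t+W\le t+B-1$, after both vehicles have departed, with a probability-preserving correspondence of sample paths. Over that finite window the realized cost of $\bar\pi$ never exceeds that of $\pi$ under Assumption~\ref{A:convex}, and after recoupling the trajectories are identical, so the continuation (whose bias is well defined and bounded by Lemma~\ref{prop:ini}) contributes equally to both sides and the comparison collapses to the window inequality. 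The hard part, and the main obstacle, is bridging from this \emph{non-stationary} interchange to a \emph{stationary} policy: the cost guarantee hinges on the engineered swap-back at stage $w$, whereas a naive single-state swap inside a stationary rule carries no such swap-back, and the fixed rule will in general drive the two post-swap state sequences permanently apart, so Theorem~\ref{prop:update} does not by itself certify optimality of the swapped stationary policy.

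To dispose of this obstacle I would run the promotion argument on finite-horizon (equivalently $\alpha$-discounted) cost-to-go functions, where backward induction makes every continuation comparison legitimate and no stationarity need be assumed. Starting from a finite-horizon optimal policy that violates LLLP, I repeatedly apply the interchange, measuring progress by a potential that counts order-inversions lexicographically by stage, so that closing the chosen $(i,j)$ inversion outweighs any inversions the swap-back may introduce at later stages; the potential is bounded below and strictly decreases at each step, forcing termination at a finite-horizon optimal policy that obeys LLLP. Letting the horizon grow (or $\alpha\to1$) and using finiteness of the stationary-policy set together with the constant optimal cost from Lemma~\ref{prop:ini} then extracts a stationary, average-optimal policy following the LLLP principle, with LLLP-consistent actions chosen freely at transient states. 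The remaining technical points are verifying that the inversion potential is genuinely monotone under promotion along the partial order (using transitivity of $\preccurlyeq$ so that rerouting charge past intermediate states does not increase disorder) and controlling the horizon/discount limit so that the limiting stationary policy inherits both optimality and the LLLP property.
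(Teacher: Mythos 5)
Your first two paragraphs are on target and match the paper's framing: the problem does reduce to exhibiting, at each state, a Bellman-minimizing action that is up-closed under $\preccurlyeq$, and you correctly identify the real obstacle --- Theorem \ref{prop:update}'s guarantee is tied to the engineered swap-back at stage $w$, so it does not by itself certify optimality of a stationary rule modified at a single state. The gap is in your resolution of that obstacle. The parenthetical ``equivalently $\alpha$-discounted'' is false, and fatally so. The interchange inequality \eqref{eq:update2} is an intrinsically undiscounted statement: when $\mathcal{G}(\{\bx_k\})$ is empty, the swap raises vehicle $i$'s departure penalty by $q(\rho_i+1)-q(\rho_i)$, charged at stage $\beta_i-1$, and lowers vehicle $j$'s by $q(\rho_j)-q(\rho_j-1)$, charged at stage $\beta_j-1$; Assumption \ref{A:convex} makes the undiscounted net change nonpositive, but when $\beta_i<\beta_j$ any discount factor $\alpha<1$ weights the early increase more than the late decrease. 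Concretely, take $q(n)=n$, one vehicle in state $(1,1)$ and another in state $(2,2)$ (equal laxity, so the second has priority), and a grid state that allows exactly one vehicle to be charged per stage at zero cost. Giving priority to the $(2,2)$ vehicle incurs penalty $1$ now; charging the $(1,1)$ vehicle first incurs penalty $1$ one stage later. These tie in undiscounted cost but the LLLP-violating action is strictly better for every $\alpha<1$, so \emph{every} discounted-optimal policy violates LLLP at this state. Hence the per-$\alpha$ promotion you propose cannot be carried out, and the vanishing-discount limit (where finiteness of the stationary policy set would indeed let you freeze a subsequence) extracts an average-optimal policy with no LLLP property whatsoever. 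This is exactly why the paper restricts the principle to undiscounted average cost.

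The purely finite-horizon reading of your plan has a different gap: with terminal penalties the truncated interchange inequality can be made to work and your inversion-counting potential terminates, but finite-horizon optimal policies are non-stationary, so ``finiteness of the stationary-policy set'' gives you nothing to extract from; what you actually need is a turnpike-type statement that some decision rule recurring as the initial rule of $T$-horizon optimal policies is average-cost optimal as a stationary policy. That is a known delicate point (it can fail for periodic chains), and Assumption \ref{A:ini} does not obviously supply the aperiodicity needed to justify it. The paper bridges the non-stationary/stationary gap without any horizon or discount limit: under Assumption \ref{A:ini} it invokes the average-cost Bellman equation \eqref{eq:Bell} with constant gain $\lambda$ and a differential cost $h$ defined in \eqref{eq:h} through a modified DP in which the special state $\bar\bx$ is absorbing. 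To show the swapped stationary rule $\bar\mu$ still attains the Bellman minimum at $\bx_t$, it suffices to prove the pointwise comparison \eqref{eq:property11} between $h(\bar\bx_{t+1})$ and $h(\bx_{t+1})$, and here the non-stationary interchanging policy $\bar\pi$ is used only as a \emph{feasible competitor} inside the variational definition of $h(\bar\bx_{t+1})$ (yielding an upper bound), while the optimal continuation attains $h(\bx_{t+1})$; the window inequality \eqref{eq:update2} then closes the argument. If you want to repair your proof, replace the horizon/discount limit by this Bellman-equation argument --- your promotion-with-potential step can then be kept essentially as is to iterate the single-state swap finitely many times.
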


The proof of Theorem \ref{prop:property} is given in Appendix B.
Since the state space is finite, there
exists an optimal stationary policy (cf.
page $175$ of \cite{B11}).
The crux of our proof centers
on showing that any optimal stationary policy can be mapped to
an optimal stationary policy that follows the LLLP principle, through an optimality condition (Bellman's equation)
based argument.

\old{Since the
state space is finite, by repeating this interchange argument for finitely many times, we can
construct an optimal stationary policy that follows the LLLP
principle at all system states.}

{
\section{Numerical Results}\label{sec:app}
In this section, we compare the performance of three stationary
heuristic policies, the EDF (Earliest Deadline First) policy and
two LLF (Least Laxity First)-based heuristic policies. We consider a
case with $400$ chargers, i.e., $N=400$ (large enough to accept all arriving vehicles in
our simulation).
The state of grid
reflects the maximum capacity available for PHEV charging, i.e.,
the cost function associated with $s \in \mathcal{S}$ is given
by
$$
C(A,s)=0, \quad  {\rm if}\; A \le s;  \qquad C(A,s)=Nq(E), \quad {\rm
if}\; A > s,
$$
{where $Nq(E)$ is an upper bound on the highest possible non-completion penalty that could incur
to all vehicles in the set $\mathcal{I}_t$.}
Obviously, the operator should never charge more than $s_t$ vehicles
at stage $t$.
The states of grid, $\{s_0,s_1,\ldots\}$,
are assumed to be independent and {identically distributed} random variables that are
uniformly distributed over $\mathcal{S}=\{40,41,\ldots,160\}$.
Since we have assumed zero charging cost as an approximation for
 the case where the charging cost is much smaller than the non-completion penalty,
in our simulation the only source of cost is non-completion penalty.

For simplicity,
we consider a case where the number of arriving vehicles is a time-invariant
constant, and the initial states of arriving vehicles are
independent and identically distributed random variables. In
particular, the number of stages for which a newly arrived vehicle
$i$ will stay at a charging facility, $\beta_i-\alpha_i$, is
uniformly distributed over the set $\{1,\ldots,10\}$ (i.e., $B=10$),
and the time needed to fulfill its request,
$\gamma_{i,\alpha_i}$, is
 uniformly distributed over the set
$\{1,\ldots,\beta_i-\alpha_i\}$.

For a system state $\bx_t$, let $V(\bx_t) $ be the number of vehicles in the set $\mathcal{I}_t$
that are not fully charged. At a system state $\bx_t$, the stationary EDF policy
 charges the first $\min\{s_t,V(\bx_t)\}$ vehicles with
the earliest departure times. For two vehicles that have the same
deadline, $\pi$ charges the one with less laxity.
At a system state $\bx_t$, both LLF-based policies charge the
first $\min\{s_t,V(\bx_t)\}$ vehicles with the least laxity.
For two vehicles with the same laxity, the LLSP (Least Laxity and
Shorter remaining Processing time) policy  gives
priority to the vehicle with shorter remaining processing time
(an earlier departure time), while the LLLP (Least Laxity and Longer
remaining Processing time) policy gives
priority to the vehicle that has longer remaining processing time.

\old{
\begin{figure}\label{Fig:phev} \vspace{-5mm}
\begin{minipage}[t]{0.499\textwidth}
\centering
\includegraphics[width=2.8in]{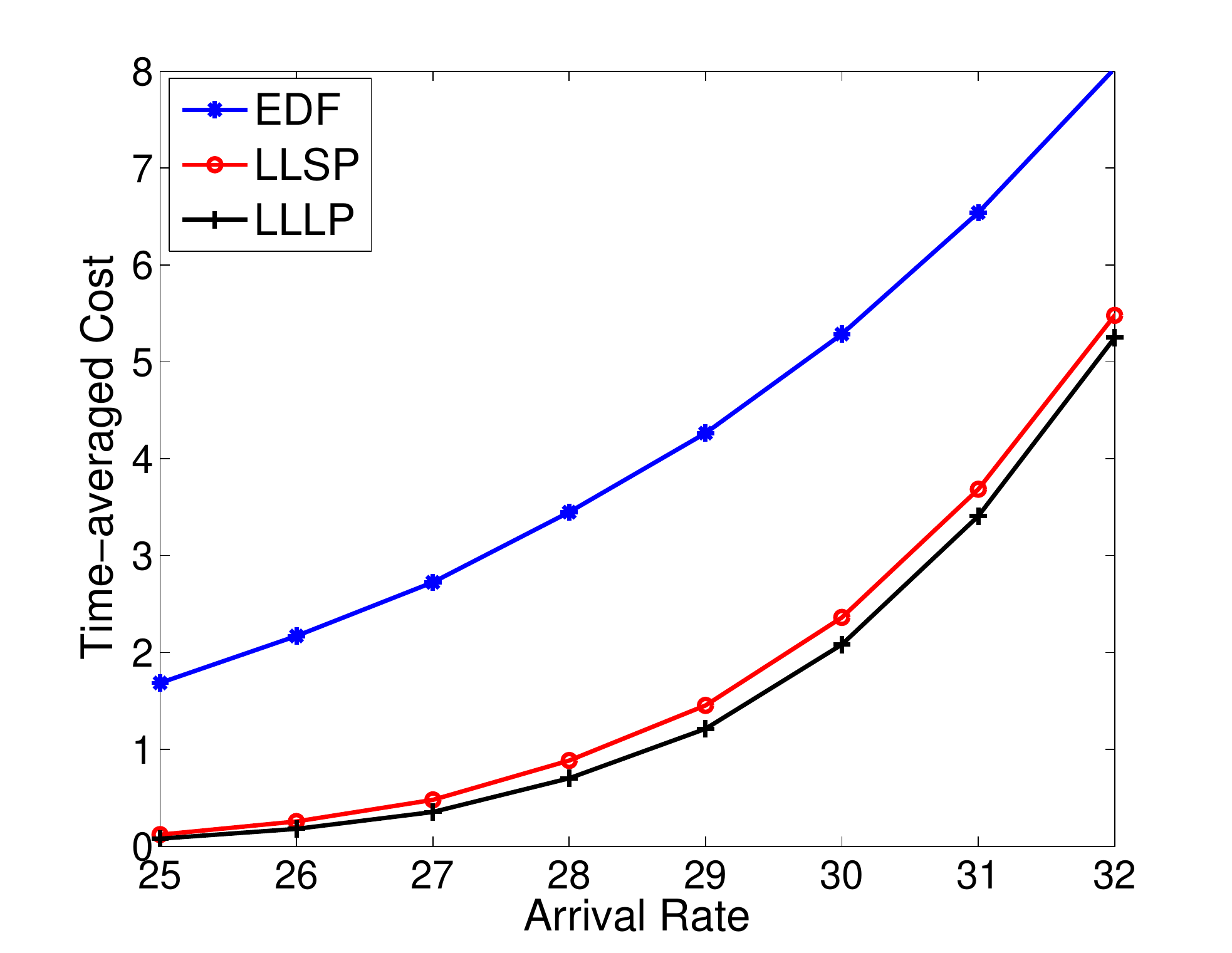}
\end{minipage}%
\begin{minipage}[t]{0.499\textwidth}
\centering
\includegraphics[width=2.8in]{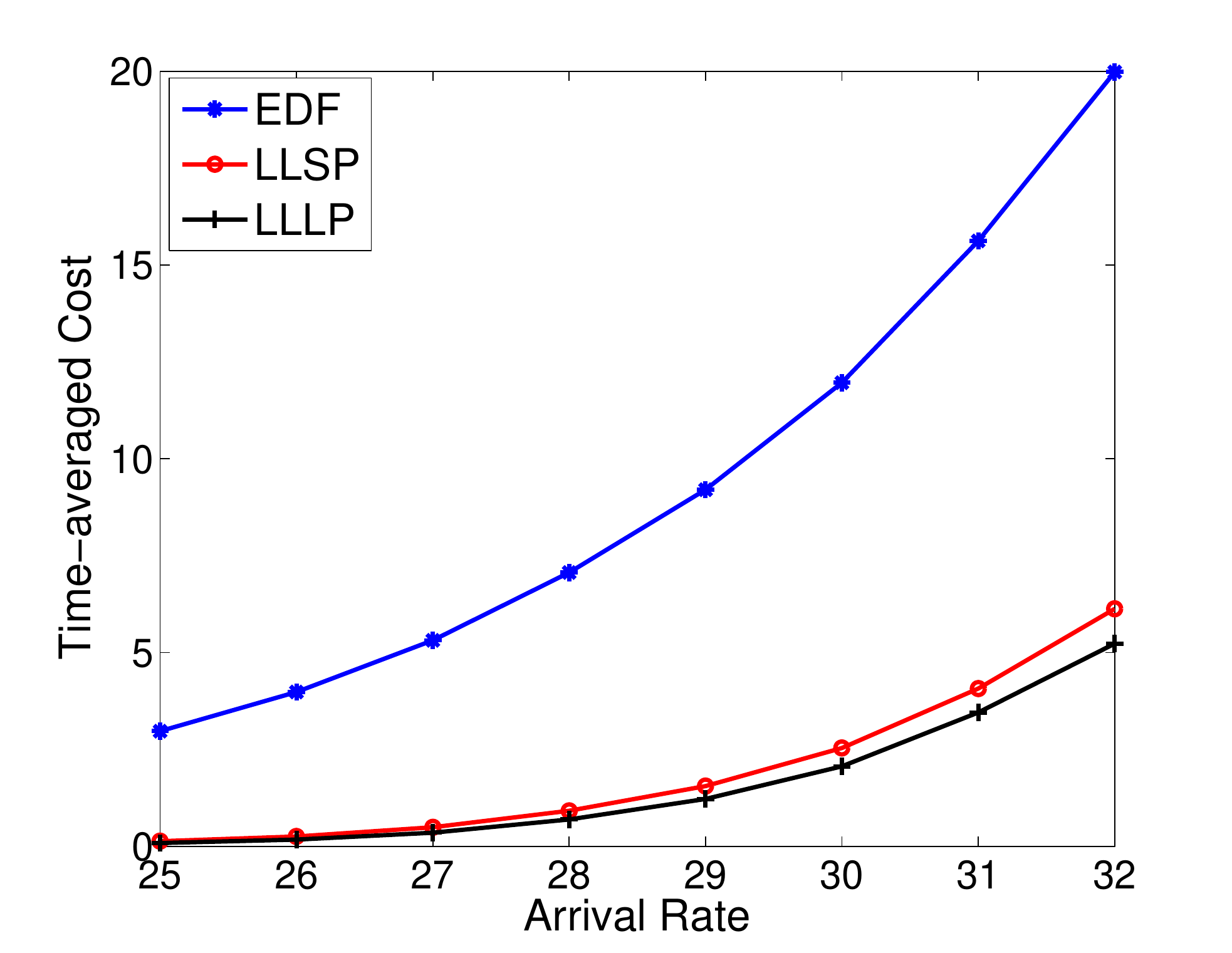}
\end{minipage}
 \vspace{-5mm}
    \caption{A simulation experiment with $1,500,000$ trajectories for each arrival rate
    on the horizontal axis and time-averaged cost on the vertical axis, {for two penalty functions
$q(n)=n$ (the left figure) and $q(n)=n^2$ (the right figure).}}  \vspace{-5mm}
\end{figure}}

\begin{figure}\label{Fig:phev}
\centering
\includegraphics[width=2.7in]{phev_2015.pdf}\vspace{-2mm}
   \caption{A simulation experiment with $1,500,000$ trajectories for each arrival rate
    on the horizontal axis and time-averaged cost on the vertical axis, with non-completion penalty
$q(n)=n$.}\vspace{-2mm}
\end{figure}

\begin{figure}\label{Fig:phev}
\centering
\includegraphics[width=2.7in]{phev_20152.pdf}\vspace{-2mm}
   \caption{A simulation experiment with $1,500,000$ trajectories for each arrival rate
    on the horizontal axis and time-averaged cost on the vertical axis, with non-completion penalty
{$q(n)=n^2$}.}\vspace{-3mm}
\end{figure}

The time-averaged cost resulting from the
three heuristic policies (EDF, LLSP, and LLLP) are compared in Fig.\
1 and Fig. 2, {for two different non-completion penalty functions $q(n)=n$ and $q(n)=n^2$.}
 For both penalty functions, the numerical results show that the LLLP policy achieves the
lowest time-averaged cost, and that LLSP significantly outperforms EDF.
{We note that the performance gap between LLSP and LLLP is much more significant under quadratic
 non-completion penalty. This is because the LLLP policy distributes the total remaining processing time to a larger number of vehicles with smaller remaining processing times,
 which in turn leads to lower non-completion penalty when the penalty function $q()$ is strictly convex.
Indeed, under the linear penalty function, the LLLP policy reduces the
time-averaged cost by $15 \%-35\%$ (compared to the LLSP policy) when the arrival rate is less than $30$;
while under the quadratic penalty function, the performance gap between LLSP and LLLP is much larger, and remains
above $15 \%$ (of the cost resulting from LLSP) even when the arrival rate ranges in $\{30, 31, 32\}$.}
}

\section{Conclusion}\label{sec:con}

We formulate the scheduling problem of charging multiple PHEVs
 as a
Markov decision process. Using an interchange argument, we prove the less laxity and longer remaining processing
time (LLLP) principle: priority should be given to vehicles that
have less laxity and longer remaining processing times, if the
non-completion penalty function is convex and the operator does not
discount future cost. {We note that the LLLP principle is a partial characterization on the optimal scheduling policy,
and that there may exist many stationary policies that do not violate the LLLP principle.
A plausible future research direction is to compare and rank these heuristic
policies in stylized models with more structures in system dynamics.}

\old{For a given heuristic policy, we
 introduce two forms of interchanging
 polices generated according to the LLLP principle. We
 show that an interchanging policy must outperform
 the original heuristic.
Numerical results demonstrate that heuristic policies that violate
the LLLP principle (e.g., the EDF policy) may result in significant
losses of system efficiency.}

\appendices
\section{Proof of Lemma \ref{L:update}}\label{sec:A0}
To argue the feasibility of the interchanging policy $\bar \pi$, we will show that
\begin{enumerate}
\item in period $w$ when the original policy $\pi$ first charges vehicle $j$ but not $i$, it is feasible for the interchanging policy to charge vehicle $i$;

\item  in period $k=t+1,\ldots,w-1$, whenever the original policy $\pi$ charges vehicle $j$, it is feasible for the interchanging policy to charge vehicle  $j$.
\end{enumerate}

It is straightforward to check the first point, i.e., at the stage $w$ it is feasible for the interchanging policy $\bar \pi$ to charge vehicle $i$.
This is because the original policy $\pi$ charges vehicle $i$ at stage $t$ but the interchanging policy does not,
and the interchanging policy does not
charge vehicle $i$ whenever the original policy $\pi$ does not, at every stage before $w$.

We now prove the second point.
We first consider the case
where the original policy $\pi$ first charges vehicle $j$ but not $i$ at stage $w$.
Since at state $\v{x}_t$, vehicle $j$ has priority over $i$, and the policy $\pi$
charges vehicle
 $i$ but not $j$, we must have
 $\theta_{j,t+1}<\theta_{i,t+1}$ and $\gamma_{j,t+1}>\gamma_{i,t+1}$ at state $\bx_{t+1}$.
Before stage $w$ (for $k=t+1,\ldots,w-1$),
whenever the policy $\pi$ charges vehicle $j$ at state $\bx_k$, it
also charges vehicle $i$. It follows that for
$k=t+1,\ldots,w-1$, whenever the policy $\pi$ charges vehicle
$j$ at $\bx_k$, we must have $\gamma_{j,k}>\gamma_{i,k} \ge
1$, which implies that $\hat \gamma_{j,k} = \gamma_{j,k}-1 \ge 1$,
i.e., it is feasible for the interchanging policy to charge $j$.

A similar argument applies to the case
where the set $\mathcal{G}(\{\v{x}_k\}_{k=t+1}^{t+W})$ is empty ($w = \infty$),
and the deadline of vehicle $j$ is no later than  $i$'s. We have $W=\beta_i-1$.
Before vehicle $i$'s departure (for $k=t+1,\ldots,\beta_i-1$),
whenever the policy $\pi$ charges vehicle $j$ at state $\bx_k$, it
also charges vehicle $i$. It follows that for
$k=t+1,\ldots,\beta_i-1$, whenever the policy $\pi$ charges vehicle
$j$ at  $\bx_k$, we  have $\gamma_{j,k}>\gamma_{i,k} \ge
1$, and therefore $\hat \gamma_{j,k} = \gamma_{j,k}-1 \ge 1$.

We finally consider the case where $w=\infty$ and the deadline of $j$ is later than
$i$'s. We have $W=\beta_j-1$.
 At state $\bx_{\beta_i-1}$, we know vehicle $j$'s laxity must
 be strictly less than vehicle $i$'s, i.e.,
$$
\theta_{j,\beta_i-1}=\lambda_{j,\beta_i-1}-\gamma_{j,\beta_i-1}<\lambda_{i,\beta_i-1}-\gamma_{i,\beta_i-1}
=1-\gamma_{i,\beta_i-1}.
$$
If $\gamma_{i,\beta_i-1} \ge 1$, then we have
$\theta_{j,\beta_i-1}<0$. If $\gamma_{i,\beta_i-1}=0$, since the
policy $\pi$ does not
 charge vehicle $i$ at state $\bx_{\beta_i-1}$,
it does not charge vehicle $j$, because the set
$\mathcal{G}(\{\v{x}_k\}_{k=t+1}^{t+W})$ is empty. In this case,
 we have
$\theta_{j,\beta_i-1}\le 0$ and $\theta_{j,\beta_i}<0$. In either
case,  for $k=\beta_i,\ldots,\beta_j-1$, we have $\theta_{j,k}<0$.
It follows that for $k=\beta_i,\ldots,\beta_j-1$, $\hat \gamma_{j,k}
= \gamma_{j,k}-1 \ge 1$.

\section{Proof of Theorem \ref{prop:property}}\label{sec:C}

We first introduce a necessary and sufficient condition for the optimality of a stationary policy. This condition will be used later
in the proof.
 Under Assumption \ref{A:ini}, the minimum average cost $\lambda$,
 together with an $|\mathcal{X}|$-dimensional vector $\v{h}=\{h(\v{x})\}_{\bx\in
 \mathcal{X}}$, satisfies the following Bellman's
equation:
\begin{equation}\label{eq:Bell}
\lambda+h(\v{x})=\min_{\v{a}\in U(\v{x})} \left\{  g(\v{x},\v{a}) +
\sum_{\v{y} \in \mathcal{X}} p_{\v{x},\v{y}} (\v{a})
h(\v{y})\right\},\;\;\;\forall \v{x} \in \mathcal{X},
\end{equation}
where $U(\v{x})$ denotes the set of feasible actions at system state
$\bx$. It is known that a stationary policy $\mu^*$ is optimal,
i.e., $J_{\mu^*}(\bx) = \lambda$ for all $\bx$, if $\mu^*(\bx)$
attains the minimum in \eqref{eq:Bell} for each $\bx \in
\mathcal{X}$  \cite{B11}.

Let $(\lambda,\v{h})$ be a solution to the Bellman's
equation in \eqref{eq:Bell}. For every $\bx \in  \mathcal{X}$,
 $h(\bx)$ (usually referred to as the differential cost
for state $\bx$) is the minimum, over all policies, of the
difference between the expected cost to reach the special state
$\bar {\bx}$ (cf. \eqref{eq:spe}) from $\bx$ for the first time and the cost that
would be incurred if the cost per stage were equal to the minimum
average $\lambda$ at all states.  \blue{To formally define $h(\bx)$, consider a
modified dynamic program (DP) that is the same as the DP formulated in
Section \ref{sec:DP}, except that the special system state is
absorbing ($\tilde p_{\bar {\bx}, \bar {\bx}}(\v{a})=1$
for every action vector $\v{a}$), and the cost associated with the
special system state is $\lambda$ ($\tilde g (\bar {\bx},
\v{a})=\lambda$ for every feasible action $\v{a}$). Here, $\tilde p$ and $\tilde g$ denote the state
transition and stage cost functions of the modified Markov decision process, respectively.} The differential
cost for state $\bx$ can be written as
\begin{equation}\label{eq:h}
h(\v{x})\buildrel \Delta \over =\min_{\pi} \mathop {\limsup }\limits_{T \to \infty }
\E\left\{ {\sum\limits_{t = 0}^{T - 1}( {\tilde g(\v{x}_t,
\nu_t(\v{x}_t))} - \lambda) } \, \big | \, \v{x}_0=\v{x} \right\},
\end{equation}
where $\pi=\{\nu_0,\nu_1,\ldots,\}$. In the proof of Lemma
\ref{prop:ini} we have shown that for any initial state $\bx$, the
special system state is visited with positive probability at least
once within the first $(B+\max\{m,|\mathcal{D}|\})$ steps, which
implies that $h(\v{x})$ is finite for every $\bx \in \mathcal{X}$.
\old{If we take ${\tilde g(\v{x}_t, \nu_t(\v{x}_t))} - \lambda$ as
the stage cost function, the modified DP is actually a Stochastic
Shortest Path (SSP) problem with the termination state $\bar
\bx$.}

We next employ a simple interchange argument to prove the existence of
an optimal stationary policy that follows the LLLP principle.
Let $\mu$ be an optimal stationary policy, and suppose that there
exists a system state $\v{x}_t$ such that the $i$th component of
$\mu(\v{x}_t)$ is $1$, the $j$th component of $\mu(\v{x}_t)$ is $0$,
and that $i \preccurlyeq j$. Since $\gamma_{i,t} \ge 1$ and $i
\preccurlyeq j$, we must have $\gamma_{j,t} \ge 1$. We argue that
another stationary policy $\bar \mu$, which agrees with the
decisions made by the policy $\mu$ except that $\bar \mu$
charges vehicle $j$ instead of $i$ at the system state $\bx_t$, must
also be optimal.

For the state $\bx_t$,  the optimal stationary policy $\mu$ attains
the minimum in \eqref{eq:Bell} \cite{B11}. To argue that the
stationary policy $\bar \mu$ is optimal, we only need to show
that the stationary policy $\bar \mu$ also attains the minimum
in \eqref{eq:Bell}, i.e.,
\begin{equation}\label{eq:property1}
  \begin{array}{l}
\displaystyle \;\;\; \;g(\v{x}_t,\bar \mu(\v{x}_t)) +
\sum\nolimits_{\bar {\v{x}}_{t+1} \in {\mathcal{X}}}
p_{\v{x}_t,\bar {\v{x}}_{t+1}}
(\bar \mu(\v{x}_t)) h(\bar {\v{x}}_{t+1})\\[6pt]
\displaystyle  \le g(\v{x}_t,\mu(\v{x}_t)) +
\sum\nolimits_{\v{x}_{t+1} \in {\mathcal{X}}}
p_{\v{x}_t,\v{x}_{t+1}} (\mu(\v{x}_t)) h(\v{x}_{t+1}).
\end{array}
\end{equation}

\old{
Note that at state $\v{x}_t$, both policies $\mu$ and $\bar
\mu$ result in the same aggregate action ${A}_t$. It follows
that $g(\v{x}_t,\bar \mu(\v{x}_t)) = g(\v{x}_t,\mu(\v{x}_t))$, since the charging cost depends only
 on $s_t$ and $A_t$, and both vehicles $i$ and $j$

as a result, Eq. \eqref{eq:property1} is equivalent to
\begin{equation}\label{eq:property11}
  \begin{array}{l}
\displaystyle \;\;\; \sum\nolimits_{\bar {\v{x}}_{t+1} \in
{\mathcal{X}}} p_{\v{x}_t,\bar {\v{x}}_{t+1}} (\bar
\mu(\v{x}_t)) h(\bar {\v{x}}_{t+1}) \le \sum\nolimits_{\v{x}_{t+1} \in {\mathcal{X}}}
p_{\v{x}_t,\v{x}_{t+1}} (\mu(\v{x}_t)) h(\v{x}_{t+1}).
\end{array}
\end{equation}}

For every ${\v{x}}_{t+1}$ such that $p_{\v{x}_t,\v{x}_{t+1}}
(\mu(\v{x}_t))>0$, there exists a corresponding system state
$\bar {\v{x}}_{t+1}$ that occurs with equal probability under
the policy $\bar \mu$. The system state $\bar
{\v{x}}_{t+1}$ is the same as $\v{x}_{t+1}$ except that $\bar
\gamma_{i,t+1}= \gamma_{i,t+1}+1$ and  $\bar \gamma_{j,t+1}=
\gamma_{j,t+1}-1$. For every system state ${\v{x}}_{t+1}$ that
occurs with positive probability under the policy $\mu$, we will
show that
\begin{equation}\label{eq:property11}
g(\v{x}_t,\bar \mu(\v{x}_t)) + h(\bar {\v{x}}_{t+1}) \le g(\v{x}_t,\mu(\v{x}_t))+h({\v{x}}_{t+1}),
\end{equation}
which implies the result in \eqref{eq:property1}.

For every system state ${\v{x}}_{t+1}$ that occurs with positive
probability under the policy $\mu$,
 since the state space is finite, there exists a
  policy $(\nu_{t+1},\nu_{t+2},\ldots)$ that attains the
minimum on the right hand side of \eqref{eq:h} for the state
$\bx_{t+1}$. For the policy
$\pi=(\mu,\ldots,\mu,\nu_{t+1},\nu_{t+2},\ldots)$ (a non-stationary
policy that agrees with the stationary policy $\mu$ from stage $0$
through $t$), consider its interchanging policy $\bar
\pi=(\mu,\ldots,\mu, \bar \mu, \bar \nu_{t+1},\bar
\nu_{t+2},\ldots)$  with respect to vehicles $i$ and $j$ of state
$\bx_t$ (cf. Definition \ref{def:update}). In the proof of Theorem
\ref{prop:update}, we have shown that compared to the original
policy $\pi$, the interchanging policy $\bar \pi$ cannot increase the
total cost realized on any trajectory (from stage $t$ through stage
$t+W$)\footnote{We let $W=\max\{\lambda_{i,t},\lambda_{j,t}\}-1$.} that would occur with positive probability (cf. Eq.
\eqref{eq:update2}). It follows that
\begin{equation}\label{eq:property2}
  \begin{array}{l}
\displaystyle\;\;\;
 g(\v{x}_t,\bar \mu(\v{x}_t)) +   g(\bar {\v{x}}_{t+1},\bar \nu_{t+1}(\v{x}_{t+1})) +\E_{\bar \pi}\left\{
{\sum\nolimits_{k = t+2}^{t+W} { g(\bar {\v{x}}_k,  \bar
\nu_k(\bar {\v{x}}_k))}  } \right\} \\[10pt]
\displaystyle \le g(\v{x}_t, \mu(\v{x}_t)) +g(\v{x}_{t+1},
\nu_{t+1}(\v{x}_{t+1})) +\E_{\pi}\left\{
{\sum\nolimits_{k = t+2}^{t+W} { g( {\v{x}}_k, \nu_k( {\v{x}}_k))} }
\right\},
\end{array}
\end{equation}
where the
expectations are over future system states from stage $t+2$ through
$t+W$ induced by the policy $\bar \pi$ and $\pi$, respectively.
 For stages
$k=t+1,\ldots,t+W$, since at least one vehicle ($i$ or $j$) is at a
charging facility, the special system state\footnote{The
special system state is introduced  in Eq. \eqref{eq:spe}, and the cost
function $\tilde g(\v{x}_k, \v{a}_k)$ is defined prior to Eq.
\eqref{eq:h}.} is not reached, and therefore $\tilde g(\v{x}_k,
\v{a}_k)=g(\v{x}_k, \v{a}_k)$ for any action $\v{a}_k$.

\old{
Note that $g(\v{x}_t,\bar \mu(\v{x}_t)) =
g(\v{x}_t,\mu(\v{x}_t))$. The inequality in \eqref{eq:property2}  is
equivalent to
\begin{equation}\label{eq:property3}
  \begin{array}{l}
\displaystyle\;\;\;\;  \tilde g(\bar {\v{x}}_{t+1},\bar
\nu_{t+1}(\v{x}_{t+1})) +\E_{\bar \pi}\left\{ {\sum\limits_{k =
t+2}^{t+W}( {\tilde g(\bar {\v{x}}_k, \bar \nu_k(\bar
{\v{x}}_k))} -
\lambda) } \right\} \\[12pt]
\displaystyle \le \tilde g(\v{x}_{t+1}, \nu_{t+1}(\v{x}_{t+1}))
+\E_{\pi}\left\{ {\sum\limits_{k = t+2}^{t+W}( {\tilde g( {\v{x}}_k,
\nu_k( {\v{x}}_k))} - \lambda) } \right\}.
\end{array}
\end{equation}}

 After stage $t+W$, the two policies $\pi$ and $\bar \pi$
  result in the same expected cost. We therefore have
\begin{equation}\label{eq:property4}
  \begin{array}{l}
\displaystyle\;\;\;\;  g(\v{x}_t,\bar \mu(\v{x}_t)) +h
(\bar {\v{x}}_{t+1}) + \lambda\\[4pt]
 \displaystyle
\le g(\v{x}_t,\bar \mu(\v{x}_t))+\tilde g(\bar {\v{x}}_{t+1},\bar
\nu_{t+1}(\v{x}_{t+1}))   +\mathop {\limsup }\limits_{T \to \infty }
\E_{\bar \pi}\left\{ {\sum\nolimits_{k = t+2}^{t+T}( {\tilde
g(\bar {\v{x}}_k,
\bar \nu_k(\bar {\v{x}}_k))} - \lambda) } \right\}\\[9pt]
 \displaystyle
\le g(\v{x}_t,\mu(\v{x}_t)) + \tilde g({\v{x}}_{t+1},
\nu_{t+1}(\v{x}_{t+1}))   + \mathop {\limsup }\limits_{T \to \infty }
\E_{\pi}\left\{ {\sum\nolimits_{k = t+2}^{t+T}( {\tilde g(
{\v{x}}_k,
 \nu_k(\bar {\v{x}}_k))} -
\lambda) } \right\}\\[9pt]
 \displaystyle = g(\v{x}_t,\mu(\v{x}_t))+h
({\v{x}}_{t+1}) + \lambda,
 \end{array}
\end{equation}
where the first inequality follows from the definition of
$h(\bar {\v{x}}_{t+1})$ in \eqref{eq:h}, the second inequality
 follows from
  \eqref{eq:property2}, and the last equality is true because the policy
$\pi=(\mu,\ldots,\mu,\nu_{t+1},\nu_{t+2},\ldots)$ attains the
minimum on the right hand side of \eqref{eq:h} with an initial state
$\bx_{t+1}$.

We have shown the inequality in \eqref{eq:property11} holds for
 for every ${\v{x}}_{t+1}$ such that
$p_{\v{x}_t,\v{x}_{t+1}} (\mu(\v{x}_t))>0$. It follows from
\eqref{eq:property1} that
 the stationary policy $\bar \mu$ attains
the minimum in \eqref{eq:Bell}, and is therefore optimal. Since the
state space is finite, by repeating this interchange argument for finitely many times, we can
construct an optimal stationary policy that follows the LLLP
principle at all system states.

\bibliographystyle{alpha}

\end{document}